\newtheorem{theorem}{Theorem}
\newtheorem{lemma}{Lemma}
\newtheorem{corollary}{Corollary}
\theoremstyle{definition}
\newtheorem{example}{Example}
\newtheorem{remark}{Remark}
\numberwithin{theorem}{section}
\numberwithin{lemma}{section}
\numberwithin{corollary}{section}
\numberwithin{example}{section}
\numberwithin{remark}{section}
\def\Var{\mathop{\fam 0 Var}\nolimits}
\def\As{\mathop{\fam 0 As}\nolimits}
\def\Alg{\mathop{\fam 0 Alg}\nolimits}
\def\Lie{\mathop{\fam 0 Lie}\nolimits}
\def\Com{\mathop{\fam 0 Com}\nolimits}
\def\Perm{\mathop{\fam 0 Perm}\nolimits}
\def\ComTriAs{\mathop{\fam 0 ComTriAs}\nolimits}
\def\di{\text{\rm di-}}
\def\tri{\text{\rm tri-}}
\title{Gr\"obner---Shirshov bases for replicated algebras}
\author[P.~S.~Kolesnikov]{P.S. Kolesnikov}
\address{Sobolev Institute of Mathematics,\\ Akad. Koptyug prosp., 4\\ 630090 Novosibirsk, Russia}
\email{pavelsk@math.nsc.ru}
\thanks{The research is supported by RSF (project 14-21-00065).}
\keywords{di-algebra, tri-algebra, Gr\"obner---Shirshov basis}
\subjclass[2010]{16S15, 13P10, 17A32}
\begin{document}

\begin{abstract}
 We establish a universal approach to solution of the word problem in the varieties of di- and tri-algebras. 
 This approach, for example, allows to apply Gr\"obner---Shirshov bases method for Lie algebras to solve the
 ideal membership problem in free Leibniz algebras (Lie di-algebras). As another application, we prove an analogue 
 of the Poincar\'e---Birkhoff---Witt Theorem for universal enveloping associative tri-algebra of a Lie tri-algebra
 (CTD$^!$-algebra).
\end{abstract}

\maketitle

\section{Introduction}

Gr\"obner bases theory is known as an effective computational technique in 
commutative algebra and related areas. Various questions in mathematics may be reduced 
to the {\em ideal membership problem} (or {\em word problem}): 
given a set $S$ of (commutative) polynomials, whether a given 
polynomial $f$ belongs to the ideal generated by $S$. 
In non-commutative (or even non-associative) settings, the same problem appears 
mainly in theoretical studies rather than in computational context,
see the review \cite{BokChen13}. 
The classical example is given by the Poincar\'e---Birkhoff---Witt Theorem for Lie algebras 
and its analogues for other classes of algebras (see \cite{MikhShest14}). 

It worths mentioning that Gr\"obner bases in commutative algebras \cite{Buch70} 
appeared simultaneously with standard bases (now called Gr\"obner---Shirshov bases, GSB) 
in Lie algebras \cite{Shir62}. In the last years, GSB theories have been 
established for various classes of non-associative algebras, 
including associative di-algebras \cite{BCL10, ZhangChen17}. 
The latter were introduced in \cite{LodayPir93}
as ``envelopes'' of Leibniz algebras.

Let us sketch what is a GSB theory for a given variety $\Var $ of (linear) algebras over a field~$\Bbbk $.
It usually includes the following components.
\begin{itemize}
 \item Description of the free algebra $\Var\<X\>$ generated by a set~$X$. 
    Elements of $\Var\<X\>$ are called {\em polynomials}, they are linear combinations of {\em monomials} (normal words)
    that form a linear basis of $\Var\<X\>$.
 \item Linear order on monomials compatible with algebraic operations.
 \item Elimination procedure of the leading word $\bar f$ of a polynomial $f$ in a monomial $u$.
 \item Definition of {\em compositions} and the notion of {\em triviality} 
    of a composition modulo a given set of polynomials.
 \item Composition-Diamond Lemma.
\end{itemize}
If $S$ is a set of monic polynomials (principal word appears with identity coefficient) 
such that every composition of its polynomials is trivial modulo $S$
then $S$ is said to be a GSB in $\Var\<X\>$. Principal point of GSB theory, the Composition-Diamond Lemma (CD-Lemma),
usually has the following form: $S$ is a GSB if and only if for 
every element $f$ in the ideal generated by $S$ 
there exists $g\in S$ such that $\bar f$ admits elimination of $\bar g$.
In particular, if $S$ is a GSB in $\Var\<X\>$ then the images 
of {\em $S$-reduced} monomials (those that do not admit elimination of $\bar g$, $g\in S$) 
form a linear basis of the quotient algebra 
$\Var\<X\mid S\>$ generated by $X$ with defining relations~$S$.

In this paper, we propose a general approach to the GSB theory for a class of varieties obtained by 
{\em replication procedures}. The latter, from the categorical point of view, may be easily 
explained in terms of operads. If $\Var $ is the variety governed by an operad $\mathcal P$ (see \cite{GK94}) 
then replicated varieties $\di\Var $ and $\tri\Var $ are governed by Hadamard the products 
$\Perm\otimes \mathcal P$ and $\ComTriAs\otimes \mathcal P$, respectively. Here $\Perm $ and 
$\ComTriAs$ are the operads corresponding to the varieties of Perm-algebras and commutative tri-algebras
introduced in \cite{Chapoton01} and \cite{Vallette_2007}, respectively (see \cite{GubKol2014} for more details).

In particular, for the varieties $\As$ and $\Lie $ of associative and Lie algebras, $\di\As $ and $\di\Lie$ 
coincide with the varieties of associative di-algebras and Leibniz algebras, respectively. Leibniz algebras are known 
as the most common ``non-commutative'' generalization of Lie algebras.
Note that for associative di-algebras GSB theory has been constructed in 
\cite{BCL10} and \cite{ZhangChen17}.  However, there is no GSB theory 
for Leibniz algebras as well as for tri-algebras (i.e., systems from $\tri\Var$). 
This paper aims at filling this gap.

Suppose GSB theory is known for a variety $\Var $. Then we explicitly construct a ``$\Var$-envelope'' for 
free algebras $\di\Var \<X\>$ and $\tri\Var\<X\>$ such that solution of the ideal membership problem 
in this $\Var $-envelope induces a solution of the same problem in $\di\Var \<X\>$ or $\tri\Var\<X\>$.
This approach differs from the usual one described above, but it is easier in applications.   
We will apply the technique developed to study universal enveloping Lie tri-algebras of Lie algebras 
and associative enveloping tri-algebras of Lie tri-algebras.

\section{Replication of varieties}

Let $(\Sigma , \nu)$ be a {\em language}, i.e., a set of operations together with 
arity function $\nu: \Sigma \to \mathbb Z_+$. 
A {\em $\Sigma $-algebra\/} is a linear space $A$ equipped with 
polylinear operations $f: A^{\otimes n}\to A$, $f\in \Sigma $, $n=\nu(f)$. 
Denote by $\Alg = \Alg_{\Sigma }$ the class of all $\Sigma $-algebras, and 
let $\Alg \<X\>$ stand for the free $\Sigma $-algebra generated by a set~$X$. 
(For example, if $\Sigma $ consists of one binary operation then $\Alg\<X\>$ 
is the magmatic algebra.)

For a given language $(\Sigma , \nu)$, define two {\em replicated} languages 
$(\Sigma^{(2)}, \nu^{(2)})$ and $(\Sigma ^{(3)}, \nu^{(3)})$ as follows:
\[
\begin{gathered}
\Sigma^{(2)} = \{f_i \mid f\in \Sigma, i=1,\dots, \nu(f) \},\quad \nu^{(2)}(f_i)=\nu(f); \\  
\Sigma^{(3)} = \{f_H \mid f\in \Sigma, \varnothing\ne H\subseteq \{1,\dots, \nu(f)\} \},
             \quad \nu^{(3)}(f_H)=\nu(f).
\end{gathered}
\]
Denote $\Alg^{(2)}=\Alg_{\Sigma^{(2)}}$ and $\Alg^{(3)}=\Alg_{\Sigma^{(3)}}$.

Note that $\Sigma^{(2)}$ may be considered as a subset of 
$\Sigma^{(3)}$ via $f_i=f_{\{i\}}$. 
This is why we will later deal with $\Sigma^{(3)}$ assuming the same statements (in a more simple form)
 hold for $\Sigma ^{(2)}$. 

Suppose an element $\Phi \in \Alg \<X\>$ is polylinear with respect to $x_1,\dots , x_n \in X$. 
Then for every nonempty subset $H\subset \{1,\dots, n\}$
we may define $\Phi_H\in \Alg^{(3)} \<X\>$ in the following way (see \cite{GubKol2014} for more details). 
Every monomial summand of $\Phi$ may be naturally considered as a rooted tree whose leaves are labelled by 
variables $x_1,\dots, x_n$ and nodes are labelled by symbols of operations from $\Sigma $. 
Every node labelled by $f\in \Sigma $ has one ``input'' and $\nu(f)$ ``outputs'', each output is attached to a subtree. 
Let us emphasize leaves $x_i$ for $i\in H$ and change the labels of nodes 
by the following rule. For every node labelled by a symbol $f\in \Sigma $
consider the subset $S$ of $\{1,\dots, \nu(f)\}$ which consists of those output numbers that are attached 
to subtrees containing emphasized leaves.
If $S$ is nonempty then replace $f$ with $f_S$, if $S$ is empty then replace $f$ with $f_{\{1\}}$.
Transforming every polylinear monomial of $\Phi $ in this way, we obtain $\Phi_H$.
(For $\Sigma^{(2)}$, it is enough to consider $|H|=1$.)

Suppose $\Var $ is a variety of $\Sigma $-algebras defined by a collection of polylinear identities
$S(\Var )\subset \Alg \<X\>$, $X=\{x_1,x_2,\dots \}$.
The following statement may be interpreted as a definition of what is the variety $\tri\Var$. 

\begin{theorem}[\cite{GubKol2014}]
A $\Sigma ^{(3)}$-algebra belongs to the class $\tri\Var $
if and only if it satisfies the following identities in $\Alg^{(3)}\<X\>$:
\begin{multline}\label{eq:Zero-Identities}
 f_H(x_1,\dots , x_{i-1}, g_S(x_i,\dots, x_{i+m-1}), x_{i+m}, \dots, x_{n+m-1}) \\
=
 f_H(x_1,\dots , x_{i-1}, g_Q(x_i,\dots, x_{i+m-1}), x_{i+m}, \dots, x_{n+m-1}),
\end{multline}
where 
$f,g\in \Sigma $, $\nu(f)=n$, $\nu(g)=m$, $H\subseteq \{1,\dots, n\}$, 
$S,Q\subseteq \{1,\dots, m\}$, $i\notin H$, 
and
\begin{equation}\label{eq:triAlgebra}
 \Phi_H(x_1,\dots , x_n) = 0, 
\end{equation}
where $\Phi \in S(\Var)$, $\deg \Phi=n$, $H\subseteq \{1,\dots, n\}$.
\end{theorem}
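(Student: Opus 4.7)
The plan is to reduce the statement to an operadic assertion: since $\tri\Var$ is governed by the Hadamard product $\ComTriAs \otimes \mathcal{P}$ where $\mathcal{P}$ governs $\Var$, it suffices to identify (\ref{eq:Zero-Identities}) with the relations coming from $\ComTriAs$ and (\ref{eq:triAlgebra}) with those coming from $\mathcal{P}$, then argue that together they generate all defining relations of the product.

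For the forward direction, I would first check (\ref{eq:Zero-Identities}) directly in any $\tri\Var$-algebra. These identities encode the following principle built into the replication procedure: when composing $f_H$ with $g_S$ in the $i$-th slot and $i \notin H$, the subtree rooted at $g$ contains no emphasized leaves, so by construction $g$ is relabelled with the default subset $\{1\}$ regardless of its prior label; hence the result is independent of $S$. This is the combinatorial content of the rule ``if $S$ is empty then replace $f$ with $f_{\{1\}}$'' applied after composition. Next, for (\ref{eq:triAlgebra}) I would argue that if $\Phi(x_1,\dots,x_n) = 0$ is an identity of $\Var$, then for every $H$ the element $\Phi_H$, obtained by marking the leaves in $H$ and relabelling nodes according to the rule described before the theorem, vanishes in $\tri\Var\<X\>$ because the replication construction is functorial: it sends $\Var$-identities to $\tri\Var$-identities by design.

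For the reverse direction, the task is to show that any $\Sigma^{(3)}$-algebra $A$ satisfying (\ref{eq:Zero-Identities}) and (\ref{eq:triAlgebra}) lies in $\tri\Var$. The natural approach is to exhibit a surjection from the free algebra $\tri\Var\<X\>$ (for $X$ in bijection with a generating set of $A$) onto $A$. Concretely, I would construct the quotient $Q$ of $\Alg^{(3)}\<X\>$ by the identities (\ref{eq:Zero-Identities}) and (\ref{eq:triAlgebra}), produce a canonical map $Q \to A$ evaluating variables, and then identify $Q$ with $\tri\Var\<X\>$. This identification is the crux: one must exhibit a linear basis of $Q$ in bijection with a linear basis of $\tri\Var\<X\>$ (which, via the operadic picture, is indexed by pairs consisting of a $\Var$-monomial together with a distinguished leaf or, for tri-algebras, a distinguished subset of leaves).

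The main obstacle is precisely this last identification, i.e.\ completeness of the given relations. One must prove that any further relation forced by the Hadamard product $\ComTriAs \otimes \mathcal{P}$ is already a consequence of (\ref{eq:Zero-Identities}) and (\ref{eq:triAlgebra}). I would handle this by choosing in each polylinear component of $\Alg^{(3)}\<X\>$ a system of normal forms in which each tree carries a single ``canonical'' subset label at each node (determined by the emphasized leaves), use (\ref{eq:Zero-Identities}) to reduce arbitrary label configurations to canonical ones, and then verify that (\ref{eq:triAlgebra}) kills exactly the expected $\Var$-relations in each copy indexed by $H$. The dimension count then matches that of $(\ComTriAs \otimes \mathcal{P})(n)$, completing the proof.
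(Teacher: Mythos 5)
You should first note that the paper itself contains no proof of this statement: it is imported verbatim from \cite{GubKol2014}, so there is no in-text argument to compare with, and your proposal has to stand on its own. As a high-level outline it does follow the natural route (identify $\tri\Var$ with algebras over $\ComTriAs\otimes\mathcal P$, check the listed identities hold there, then show the quotient of the free $\Sigma^{(3)}$-algebra by \eqref{eq:Zero-Identities} and \eqref{eq:triAlgebra} is no bigger than the relatively free $\tri\Var$-algebra). The forward direction is essentially right, although even there you implicitly use the unverified fact that the structure morphism sends the canonically labelled monomial $u_H$ to $e_H\otimes u$, where $e_H$ denotes the basis element of the $\ComTriAs$-component indexed by $H$; this is a small computation with the composition rule of $\ComTriAs$, but it is the computation, not a restatement of the labelling rule, that proves \eqref{eq:Zero-Identities}.

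The genuine gap is in the reverse direction, which you reduce to the assertion that \eqref{eq:triAlgebra} ``kills exactly the expected $\Var$-relations in each copy indexed by $H$'' and that ``the dimension count then matches.'' That assertion is the whole content of the theorem, and two distinct things are missing. (i) Spanning/upper bound: it is not enough that the defining identities of $\Var$ replicate; you must show that every polylinear \emph{consequence} of $S(\Var)$ (obtained by substitutions and compositions) replicates to a consequence of \eqref{eq:Zero-Identities}--\eqref{eq:triAlgebra}. Substituting a monomial into a variable $x_i$ of $\Phi_H$ with $i\notin H$ produces trees whose node labels are not the canonical ones and must be renormalized; renormalizing nodes deep inside a non-emphasized branch requires an iterated, back-and-forth use of \eqref{eq:Zero-Identities}, and checking that this bookkeeping is coherent for nested substitutions is precisely the lemma your sketch skips. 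Without it, the span of the $H$-labelled monomials in your quotient $Q$ is not known to be a quotient of $\mathcal P(n)$, so the upper bound in your dimension count is unproved. (ii) Independence/lower bound: to know the listed identities do not collapse more than the $\Var$-relations in each $H$-component, you need either to prove that $\ComTriAs\otimes\mathcal P$ is generated by the elements $e_H\otimes f$, $f\in\Sigma$ (so that $Q$ surjects onto it), or to evaluate the would-be extra relations in explicit models of the variety defined by \eqref{eq:Zero-Identities}--\eqref{eq:triAlgebra}, e.g. tri-algebras of the form $C\otimes B$ with $C\in\tri\Com$ and $B\in\Var$, or the averaging-operator construction the paper uses in Section~\ref{sec3}. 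Neither step appears in your proposal, so as written the ``dimension count'' assumes the conclusion rather than establishing it.
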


\begin{example}
Let $\Sigma $ consist of one binary product $\mu(a ,b ) = [a b]$. Consider 
$\Var = \Lie $ ($\mathrm{char}\,\Bbbk \ne 2$) with defining identities 
\[
 [x_1x_2]+[x_2x_1]=0, \quad [[x_1x_2]x_3] + [[x_2x_3]x_1]+[[x_3x_1]x_2]=0. 
\]
Then $\Sigma^{(3)}$ consists of three binary operations 
\[
 \mu_{\{1\}}(a,b) = [a \dashv b], \quad 
 \mu_{\{2\}}(a,b) = [a \vdash b], \quad 
 \mu_{\{1,2\}}(a,b) = [a \perp b].
\]
The family of defining identities of the variety
$\tri\Lie $ of {\em Lie tri-agebras}
contains replicated skew-symmetry 
\[
 [x_1\vdash x_2]+[x_2\dashv x_1]=0, \quad [x_1\perp x_2]+[x_2\perp x_1] =0.
\]
This allows replace $[a\dashv b]$ with $-[b\vdash a]$ and express all defining relations 
in terms of $[\cdot \vdash \cdot ]$ and $[\cdot \perp \cdot]$. 
It is easy to see that \eqref{eq:Zero-Identities} turn into 
\begin{gather}
 {}
[[x_1\vdash x_2]\vdash x_3]=-[[x_2\vdash x_1]\vdash x_3], \label{eq:triLieI} \\
 [[x_1\perp x_2]\vdash x_3]=[[x_1\vdash x_2]\vdash x_3], \label{eq:triLie2}
\end{gather}
and the replication of Jacobi identity leads (up to equivalence) to
the following three relations:
\begin{gather}
{}
 [[x_1\vdash x_2]\vdash x_3] - [x_1\vdash [x_2\vdash x_3]]+[x_2\vdash [x_1 \vdash x_3]]=0, \label{eq:triLie-3} \\
 [[x_1\vdash x_2]\perp x_3] - [x_1\vdash [x_2\perp x_3]]-[[x_1\vdash x_3]\perp x_2]=0, \label{eq:triLie-4} \\
 [[x_1\perp x_2]\perp x_3] + [[x_2\perp x_3]\perp x_1]+[[x_3\perp x_1]\perp x_2]=0. \label{eq:triLie-5}
\end{gather}
Note that \eqref{eq:triLie-3} is the left Leibniz identity, \eqref{eq:triLieI} easily follows from 
\eqref{eq:triLie-3}. Therefore, a Lie tri-algebra may be considered as a linear space with two operations 
$[\cdot \vdash \cdot]$ and $[\cdot \perp \cdot ]$ such that the first one satisfies the left Leibniz identity, 
the second one is Lie, and \eqref{eq:triLie2}, \eqref{eq:triLie-4} hold. 
\end{example}

\begin{remark}
 The operad governing the variety $\tri\Lie$ is Koszul dual to the operad CTD  governing 
 the variety of commutative tridendriform algebras introduced 
 in \cite{Loday2007}. This is a particular case of a general relation 
 between di- or tri- algebras and their dendriform counterparts \cite{GubKol2013}.
 In \cite{Zinb}, $\tri\Lie $ is stated as CTD$^!$. 
\end{remark}

In a similar way, one may construct the defining identities of the variety $\tri\As$: the latter coincides
with the variety of triassociative algebras introduced in \cite{LodayRonco2004}.

\begin{example}\label{exmp:C_2}
Let $C_2$ be the 2-dimensional space with a basis
$\{e_1,e_2\}$ equipped with binary operations
\[
e_i\perp e_i = e_i, \quad 
e_1\vdash e_1 = e_1\dashv e_1 = e_1, \quad
e_1\vdash e_2 = e_2\dashv e_1 = e_2,
\]
other products are zero. It is easy to check that $C_2\in \tri\Com $, 
where $\Com $ is the variety of associative and commutative algebras.
\end{example}

\section{Construction of free di- and tri-algebras}\label{sec3}

In this section, we present simple construction of the free algebra 
$\tri\Var\<X\>$ generated by a given set $X$
in the variety $\tri\Var $. The same construction works for $\di\Var$
after obvious simplifications. 
To make the results more readable, we restrict to the case 
when $\Sigma $ consists of one binary product since this is the most common 
case in practice. However, there are no obstacles to the transfer 
of the following considerations to an arbitrary language.

Given a set $X$, denote by $\dot X$ the copy of $X$: 
\[
 \dot X = \{\dot x \mid x\in X \}.
\]
Denote by $F$ the free algebra $\Var \<X\cup \dot X\>$ in the variety $\Var $. 
There exists unique homomorphism $\varphi : F\to \Var\<X\>\subset F$ determined by 
$x\mapsto x$, $\dot x\mapsto x$, $x\in X$. 

Let us define three binary operations on the space $F$ as follows:
\begin{equation}\label{eq:OperAve}
 f\vdash g = \varphi(f)g,\quad f\dashv g = f\varphi(g), \quad f\perp g = fg, 
\end{equation}
$f,g\in F$. Denote the system obtained by $F^{(3)}$.

\begin{lemma}\label{lem:F3-ave}
Algebra $F^{(3)}$ belongs to $\tri\Var $. 
\end{lemma}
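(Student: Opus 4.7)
By the Theorem cited just before \eqref{eq:Zero-Identities}, membership of $F^{(3)}$ in $\tri\Var$ is equivalent to verifying the two families of identities \eqref{eq:Zero-Identities} and \eqref{eq:triAlgebra}. The whole plan rests on one simple observation about the envelope $\varphi\colon F\to F$: it is an idempotent endomorphism of the $\Sigma$-algebra $F$, so for every $u,v\in F$ and for each of the three products in \eqref{eq:OperAve},
\[
 \varphi(u\vdash v)\;=\;\varphi(u\dashv v)\;=\;\varphi(u\perp v)\;=\;\varphi(u)\,\varphi(v).
\]
In other words, applying $\varphi$ obliterates the decoration $S$ in $\mu_S$.

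The zero identities \eqref{eq:Zero-Identities} then follow immediately. In the binary setting the only instances to check are $\mu_{\{2\}}(\mu_S(a,b),c)=\mu_{\{2\}}(\mu_Q(a,b),c)$ and $\mu_{\{1\}}(a,\mu_S(b,c))=\mu_{\{1\}}(a,\mu_Q(b,c))$; by definition these are $\varphi(\mu_S(a,b))\,c=\varphi(\mu_Q(a,b))\,c$ and $a\,\varphi(\mu_S(b,c))=a\,\varphi(\mu_Q(b,c))$, and the displayed identity shows that both sides reduce to $\varphi(a)\varphi(b)\,c$ and $a\,\varphi(b)\varphi(c)$ respectively, independently of $S$ and $Q$.

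The substantial part is \eqref{eq:triAlgebra}. I would establish, by induction on tree depth, the following interpretation lemma. For every polylinear monomial $M(x_1,\dots,x_n)$ in the binary operation and every $H\subseteq\{1,\dots,n\}$, if we extend the replication convention so that every node whose subtrees contain no emphasized leaves is labelled $\mu_{\{1\}}={\dashv}$ (this accommodates the empty restriction of $H$ to a subtree), then the evaluation $M_H(a_1,\dots,a_n)$ in $F^{(3)}$ equals $M(b_1^H,\dots,b_n^H)$ in $F$ when $H\ne\varnothing$, where $b_i^H=a_i$ for $i\in H$ and $b_i^H=\varphi(a_i)$ otherwise; while for $H=\varnothing$ the conclusion is instead $\varphi(M_\varnothing(a_1,\dots,a_n))=M(\varphi(a_1),\dots,\varphi(a_n))$ in $F$. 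The inductive step at an internal node $M=M_LM_R$ splits on whether $H$ meets the variables of $M_L$ and of $M_R$; the four combinations match exactly the four possible root decorations ($\perp$, $\dashv$, $\vdash$, and $\dashv$ in the all-empty case), and the identities $u\vdash v=\varphi(u)v$, $u\dashv v=u\varphi(v)$, $u\perp v=uv$ together with $\varphi^2=\varphi$ do all the bookkeeping.

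Extending the lemma by linearity to an arbitrary polylinear $\Phi\in S(\Var)$ and a nonempty $H$ gives $\Phi_H(a_1,\dots,a_n)=\Phi(b_1^H,\dots,b_n^H)$ in $F$, which vanishes because $F=\Var\<X\cup\dot X\>$ lies in $\Var$. The one genuinely delicate point I foresee is the asymmetry introduced by the convention $f_\varnothing:=f_{\{1\}}$: a leaf $x_i$ with $i\notin H$ does not literally evaluate to $\varphi(a_i)$ on its own, so the induction must carry the auxiliary clause for $H=\varnothing$ alongside; the $\varphi$ on such a leaf is actually supplied by the first $\vdash$ or $\dashv$ operation sitting above the leaf which connects its subtree to the rest of the emphasized tree.
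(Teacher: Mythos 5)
Your proof is correct, but it takes a different route from the paper. The paper's own argument is a two-line reduction: since $\varphi^2=\varphi$ and $\varphi$ is an endomorphism, one has $\varphi(\varphi(f)g)=\varphi(f\varphi(g))=\varphi(f)\varphi(g)=\varphi(fg)$, i.e.\ $\varphi$ is a \emph{homomorphic averaging operator} on $F$, and then it cites the general fact (Theorem 2.13 of \cite{GubKol2014}) that the operations \eqref{eq:OperAve} built from such an operator on a $\Var$-algebra always yield an algebra in $\tri\Var$. What you do instead is re-prove that general fact in the present special case: you verify \eqref{eq:Zero-Identities} directly from the observation that $\varphi$ erases the decoration (all three products are sent by $\varphi$ to $\varphi(u)\varphi(v)$), and you establish \eqref{eq:triAlgebra} by an induction on the monomial tree showing that $\Phi_H(a_1,\dots,a_n)$ evaluated in $F^{(3)}$ equals $\Phi(b_1,\dots,b_n)$ in $F$ with $b_i=a_i$ for $i\in H$ and $b_i=\varphi(a_i)$ otherwise, carrying the auxiliary clause $\varphi(M_\varnothing(a_1,\dots,a_k))=M(\varphi(a_1),\dots,\varphi(a_k))$ for non-emphasized subtrees; since $F\in\Var$ and $\Phi\in S(\Var)$, the result vanishes. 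Your case analysis at an internal node (the four decorations $\perp$, $\dashv$, $\vdash$, and the all-empty convention $f_\varnothing:=f_{\{1\}}$) is exactly the right bookkeeping, and your handling of the $\varnothing$-asymmetry — the $\varphi$ on a non-emphasized subtree being supplied by the operation above it — is the genuinely delicate point, correctly resolved. The trade-off: the paper's proof is shorter and works verbatim for an arbitrary language $\Sigma$, but depends on an external result; yours is self-contained and makes the mechanism visible (it is essentially the proof of the cited theorem specialized to one binary operation), at the cost of the tree induction. Note only that in \eqref{eq:triAlgebra} the top-level $H$ is implicitly nonempty (as in the construction of $\Phi_H$), which your argument respects by using $H=\varnothing$ only for subtrees.
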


\begin{proof}
Obviously, $\varphi^2=\varphi $, so 
\begin{equation}\label{eq:Averaging}
 \varphi(\varphi(f)g) = \varphi(f\varphi(g)) = \varphi(f)\varphi(g)=\varphi(fg),\quad f,g\in F.
\end{equation}
Relation \eqref{eq:Averaging} means that $\varphi $
is a homomorphic {\em averaging operator\/} on~$F$. It is well known (see \cite[Theorem 2.13]{GubKol2014})
that in this case an algebra from $\Var $ relative to operations \eqref{eq:OperAve} 
belongs to $\tri\Var $.
\end{proof}

For a monomial $u$ in $F$, denote by $\deg_{\dot X} u$ the degree of $u$ with respect to 
variables from $\dot X$.

\begin{lemma}\label{lem:F3-subalg}
The subalgebra $V$ of $F^{(3)}$ generated by $\dot X$ 
coincides with the subspace $W$ of $F$ spanned by all monomials $u$ 
such that $\deg_{\dot X} u>0$.
\end{lemma}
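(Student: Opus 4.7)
The plan is to prove the two inclusions $V\subseteq W$ and $W\subseteq V$ separately. For $V\subseteq W$, I observe that each of the three operations preserves membership in $W$: if $f,g\in W$, then $f\vdash g=\varphi(f)g$ retains all dotted variables occurring in $g$, $f\dashv g=f\varphi(g)$ retains those of $f$, and $f\perp g=fg$ retains all of them. Since $\dot X\subseteq W$, an induction on the number of operations used to build an element of $V$ from its generators gives $V\subseteq W$.

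For $W\subseteq V$, I would argue by induction on total degree, exploiting the fact that $\Var$ is defined by polylinear identities, so $F$ is multigraded by degree in the generators $X\cup\dot X$; write $F_n$, $W_n$, $V_n$ for the corresponding degree-$n$ components. The base case $W_1=\mathrm{span}\,\dot X=V_1$ is immediate. For $n\ge 2$ the space $F_n$ is spanned by products $ab$ with $a\in F_i$, $b\in F_j$, $i+j=n$, $i,j\ge 1$, so $W_n$ is spanned by those such products in which at least one of $a,b$ lies in $W$. The inductive step then splits into three cases. If $a\in W_i$ and $b\in W_j$, by induction $a,b\in V$ and $ab=a\perp b\in V$. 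If $a\in W_i$ while $b\in \Var\<X\>_j$ is dot-free, I would pick $b'\in V_j$ with $\varphi(b')=b$ and write $ab=a\varphi(b')=a\dashv b'\in V$; the symmetric subcase is handled by $\vdash$.

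The main (and only nontrivial) obstacle is justifying the choice of $b'$, i.e.\ showing that the restriction $\varphi|_{V_j}\colon V_j\to \Var\<X\>_j$ is surjective for each $j\ge 1$. By the inductive hypothesis $V_j=W_j$, so this reduces to the observation that any monomial $u(x_{i_1},\dots,x_{i_j})\in \Var\<X\>_j$ is the image under $\varphi$ of $u(\dot x_{i_1},x_{i_2},\dots,x_{i_j})\in W_j$, obtained by dotting a single leaf of the corresponding tree. Once this is in hand, the three cases combine to give $W_n\subseteq V_n$, closing the induction and yielding $W\subseteq V$.
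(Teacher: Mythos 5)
Your proof is correct and takes essentially the same approach as the paper: the inclusion $V\subseteq W$ is argued identically (the three operations preserve $W$), and your degree induction for $W\subseteq V$, with the key observation that $\varphi$ maps $W_j=V_j$ onto $\Var\<X\>_j$ by dotting a single leaf, is precisely the argument the paper compresses into ``proved analogously.''
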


\begin{proof}
It is clear that $V\subseteq W$. Indeed,  
consider $u\vdash v$, $u\dashv v$, $u\perp v$ for $u,v\in V$. 
Inductive arguments allow to assume $u,v\in W$ and thus
all three products also belong to $W$ by the definition of $F^{(3)}$. 
The converse embedding $W\subseteq V$ is proved  analogously. 
\end{proof}

It is easy to see that the space $V$ from Lemma~\ref{lem:F3-subalg} coincides with 
the ideal of $F$ generated by $\dot X$. 

\begin{theorem}\label{thm:FreeTrialg}
The subalgebra $V$ of $F^{(3)}$ is isomorphic to 
the free algebra in the variety $\tri\Var $ generated by $X$.
\end{theorem}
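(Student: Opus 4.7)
The strategy is to verify that $V$, with the assignment $X \to V$, $x \mapsto \dot x$, satisfies the universal property of $\tri\Var\<X\>$ in the variety $\tri\Var$. By Lemma~\ref{lem:F3-ave}, $V \in \tri\Var$ as a subalgebra of $F^{(3)}$, and by Lemma~\ref{lem:F3-subalg}, $V$ is generated by $\dot X$ as a tri-algebra. The universal property of $\tri\Var\<X\>$ therefore supplies a surjective tri-algebra homomorphism $\pi : \tri\Var\<X\> \twoheadrightarrow V$, $x \mapsto \dot x$. Once $V$ is also known to satisfy this universal property, applying it with $A = \tri\Var\<X\>$ and $\psi_0(x) = x$ produces a two-sided inverse to $\pi$, yielding the isomorphism.

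To verify the universal property for $V$, fix $A \in \tri\Var$ and a set map $\psi_0 : X \to A$; one needs a tri-algebra homomorphism $\hat\psi : V \to A$ with $\hat\psi(\dot x) = \psi_0(x)$, uniqueness being automatic since $V$ is generated by $\dot X$. The idea is to factor $\hat\psi$ through an ``averaging envelope'' of $A$: a $\Var$-algebra $A^\sharp$ with a homomorphic idempotent $\varphi_A$, a tri-algebra homomorphism $\iota : A \to (A^\sharp)^{(3)}$, and a retraction $\rho : (A^\sharp)^{(3)} \to A$ with $\rho\iota = \mathrm{id}_A$. Given such data, freeness of $F$ in $\Var$ extends the assignments $\dot x \mapsto \iota(\psi_0(x))$ and $x \mapsto \varphi_A(\iota(\psi_0(x)))$ to a $\Var$-homomorphism $\bar\psi : F \to A^\sharp$. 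A check on generators (using $\varphi_A^2 = \varphi_A$) shows $\bar\psi\circ\varphi = \varphi_A\circ\bar\psi$, and this intertwining forces $\bar\psi$ to respect the three operations \eqref{eq:OperAve}. Restricting to $V$ and post-composing with $\rho$ yields $\hat\psi$.

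The main obstacle is producing the envelope $(A^\sharp,\varphi_A,\iota,\rho)$ for an arbitrary $A \in \tri\Var$. A natural candidate takes $A^\sharp$ to be the free $\Var$-algebra on $A \sqcup \dot A$ modulo relations recovering $A$'s tri-operations via \eqref{eq:OperAve}, with $\varphi_A$ induced by $\dot a \mapsto a$, $a \mapsto a$ and $\iota(a) = \dot a$; the retraction $\rho$ is then furnished by tri-algebra universality. The delicate point is to check that no unexpected relations collapse the construction---a PBW-flavored step. A cleaner alternative is a direct basis comparison: by the operadic description $\tri\Var\<X\> \cong (\ComTriAs\otimes \mathcal P)(\Bbbk X)$ recalled in the introduction, together with the natural grading of $V$ by $\dot X$-degree, both sides are spanned in degree $n$ by the same $S_n$-module of $\mathcal P(n)$-shaped monomials on $n$ letters decorated by a nonempty subset marking the ``dotted'' leaves, and $\pi$ identifies these bases directly.
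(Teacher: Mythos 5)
Your overall strategy (verify the universal property of $V$, relying on Lemmas \ref{lem:F3-ave} and \ref{lem:F3-subalg}) is the same as the paper's, but your main argument has a genuine gap exactly where the real work lies. Everything is reduced to producing, for an arbitrary $A\in\tri\Var$, a $\Var$-algebra with a homomorphic idempotent averaging operator together with an embedding of $A$ (your data $(A^\sharp,\varphi_A,\iota,\rho)$). Your candidate for $A^\sharp$ --- the free $\Var$-algebra on $A\sqcup\dot A$ modulo relations encoding the tri-operations via \eqref{eq:OperAve} --- requires precisely the ``PBW-flavored'' verification that $\iota$ is injective (equivalently, that the retraction $\rho$ exists), and you leave this unproved; without it nothing prevents the presented algebra from collapsing, and then $\hat\psi$ cannot be constructed. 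This is the step the paper's proof is designed to avoid: it uses the explicit envelope $\hat A=\bar A\oplus A\in\Var$ (with $\bar A=A/A_0$, from \cite{GubKol2013}), the fact from \cite{GubKol2014} that $C_2\otimes\hat A\in\tri\Var$, and the concrete embedding $\iota(a)=e_1\otimes\bar a+e_2\otimes a$; then $\psi(f)=e_1\otimes\hat\psi(\varphi(f))+e_2\otimes\hat\psi(f)$ is checked directly to be a homomorphism of tri-algebras, the averaging property being used only for $\varphi$ on the free side, and $\chi=\iota^{-1}\circ\psi|_V$. Note also that your scheme demands more than is needed: the paper's target $C_2\otimes\hat A$ is not itself of ``averaging type,'' and no retraction defined on all of it is required --- it suffices that $\psi(\dot x)=\iota(\alpha(x))$, whence $\psi(V)\subseteq\iota(A)$ by Lemma \ref{lem:F3-subalg}.

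Your fallback ``basis comparison'' is closer to a complete argument, but as written it is an assertion rather than a proof. What can be made rigorous is a dimension count: $\pi:\tri\Var\<X\>\to V$ is a surjection preserving the multidegree in $X$; each multihomogeneous component is finite-dimensional; the component of $V$ is $\mathcal P(n)\otimes_{S_n}$ applied to the positive-$\dot X$-degree part of $(\Bbbk X\oplus\Bbbk\dot X)^{\otimes n}$, which as an $S_n$-module is $\ComTriAs(n)\otimes(\Bbbk X)^{\otimes n}$ because $\ComTriAs(n)$ has a basis of nonempty subsets of $\{1,\dots,n\}$; and the free algebra over $\ComTriAs\otimes\mathcal P$ has components of the same dimension. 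Surjectivity plus equal finite dimensions then gives injectivity, with no need for the unproved claim that ``$\pi$ identifies these bases directly.'' However, this route outsources all the substance to the Hadamard-product description of $\tri\Var$ from \cite{GubKol2014}, whereas the paper argues directly from the identity-based definition; if you take this route you must invoke that result explicitly and actually carry out the $S_n$-module identification and the grading argument, none of which appears in your sketch.
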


\begin{proof}
It is enough to prove universal property of $V$ in the class $\tri\Var$. 
Suppose $A$ is an arbitrary tri-algebra in $\tri\Var$, and let $\alpha :X\to A$
be an arbitrary map. Our aim is to construct a homomorphism 
of tri-algebras $\chi : V\to A$ such that $\chi(\dot x)=\alpha(x)$ for all $x\in X$.

Recall the following construction (proposed in \cite{GubKol2013}). 
The subspace 
\[
A_0=\mathrm{span}\,\{a\vdash b-a\dashv b,\, a\vdash b-a\perp b\mid a,b\in A \} 
\]
is an ideal of $A$. The quotient $\bar A = A/A_0$ carries
a natural structure of an algebra from $\Var $ given by 
$\bar a \bar b = \overline {a\perp b}$.
Consider the formal direct sum $\hat A = \bar A\oplus A$ 
equipped with one well-defined product
\[
\bar a b=a\vdash b,\quad 
a \bar b= a \dashv b,\quad 
\bar a \bar b =\overline{a\perp b},\quad 
a b=a\perp b,
\]
for
$\bar a, \bar b \in \bar A$, 
where $\bar c = c+A_0\in \bar A$, $c\in A$.
Then $\hat A\in \Var $.

Another important fact on $\Var $ and $\tri\Var$ was established in \cite{GubKol2014}.
For every commutative tri-algebra $C\in \tri\Com$ and for every algebra $B\in \Var $
the linear space $C\otimes B$ equipped with 
\[
 (p\otimes a)*(q\otimes b) = (p*q)\otimes ab,\quad {*}\in \{\vdash, \dashv, \perp\}, \ 
 a,b\in B,\ p,q\in C,
\]
is a tri-algebra in the variety $\tri\Var $.
There is a natural relation between $A\in \tri\Var $ and $\hat A\in \Var$. 
For the tri-algebra $C_2$ from Example~\ref{exmp:C_2} we have an embedding 
of tri-algebras $\iota : A \to C_2\otimes \hat A$ given by
\[
 \iota : a\mapsto e_1\otimes \bar a + e_2\otimes a ,\quad a\in A.
\]

Now, construct $\hat \alpha : X\cup\dot X \to \hat A$ as
\[
 \hat\alpha(x) = \overline{\alpha x} \in \bar A,\quad 
 \hat\alpha (\dot x) = \alpha (x) \in A, 
\]
for $x\in X$. 
The map $\hat \alpha $ induces a homomorphism 
of algebras $\hat \psi : F \to \hat A$. 
Finally, define 
\[
 \psi : F \to C_2\otimes \hat A
\]
by 
\begin{equation}
 \psi(f) = e_1\otimes \hat \psi (\varphi (f)) + e_2\otimes \hat\psi(f), \quad f\in F, 
\end{equation}
Let us show that $\psi $ is a homomorphism of tri-algebras. For every $f,g\in F$, ${*}\in \{{\vdash}, {\dashv}, {\perp}\}$
\begin{multline}\label{eq:TriProdCases}
 \psi(f)*\psi(g) 
 = (e_1\otimes \hat \psi(\varphi (f)) + e_2\otimes \hat\psi(f))*(e_1\otimes \hat \psi(\varphi (g)) + e_2\otimes \hat\psi(g))\\
 =(e_1*e_1)\otimes \hat \psi(\varphi (f)\varphi (g)) 
  + (e_1*e_2)\otimes \hat \psi(\varphi (f)g)
  +(e_2*e_1)\otimes \hat\psi(f\varphi (g))
  +(e_2*e_2)\otimes \hat\psi(fg) \\
 =
 \begin{cases}
  e_1\otimes \hat\psi(\varphi(fg)) + e_2\otimes \hat\psi(\varphi(f)g), & {*}={\vdash} , \\
  e_1\otimes \hat\psi(\varphi(fg)) + e_2\otimes \hat\psi(f\varphi (g)), & {*}={\dashv} , \\
  e_1\otimes \hat\psi(\varphi(fg)) + e_2\otimes \hat\psi(fg), & {*}={\perp} .
 \end{cases}
\end{multline}
On the other hand, it is straightforward to compute $\psi(f*g)$. Since $\varphi $ is a homomorphic averaging 
operator, the results coincide with those in \eqref{eq:TriProdCases}.

It is easy to see from the definition that $\psi(\dot x) = \iota (\alpha (x))$. 
Therefore, $\psi(V) \subseteq \iota (A)\subseteq C_2\otimes \hat A$ by Lemma \ref{lem:F3-subalg}.
Finally, the desired homomorphism $\chi $ may be constructed as 
\[
 \chi = \iota^{-1}\circ \psi|_V : V\to A. 
\]
In other words, the diagram
\[
\begin{CD}
\dot X @>\subseteq >> V @>\subseteq >> F^{(3)} \\
@V\varphi VV @VV\chi V @VV\psi V \\
X @>\alpha >> A @>\iota >> C_2\otimes \hat A
\end{CD}
\]
is commuting.
\end{proof}

\begin{remark}\label{rem:FreeDialg}
 All constructions of this section make sense for di-algebras. It is enough to 
 consider only operations $\vdash $ and $\dashv $ on $F$ given by the same rules. 
 The role of $V\subseteq F$ is played by the subspace of polynomials linear 
 in~$\dot X$. 
\end{remark}

\begin{example}\label{exmp:FreeLeibniz}
 Let $\Var = \Lie $, $\di\Lie $ is the variety of Leibniz algebras. Then 
 \[
  \di\Lie\<X\> \simeq V \subseteq \Lie \<X\cup \dot X\>.
 \]
Suppose $X$ is linearly ordered; let us extend  the order to $X\cup \dot X$
in the natural way:
\[
 x>y \Rightarrow \dot x>\dot y,\quad \dot x>y
\]
for all $x,y\in X$.
It is easy to see that all words of the form
\[
 [\dots [[\dot x_1x_2]x_3]\dots x_n]
\]
(with left-justified bracketing) are linearly independent in $F$ since so are their images in $U(F)\simeq \As\<X\cup \dot X\>$.
These words correspond to 
\begin{equation}\label{eq:RightLeibniz}
 [\dots [[x_1\dashv x_2]\dashv x_3]\dashv \dots\dashv  x_n] \in \di\Lie \<X\>, 
\end{equation}
where $[\cdot \dashv \cdot ]$ satisfies (right) Leibniz identity
\[
 [x\dashv [y\dashv z]] = [[x\dashv y]\dashv z] - [[x\dashv z]\dashv y] .
\]
Obviously, every Leibniz polynomial may be rewritten as a linear combination of monomials \eqref{eq:RightLeibniz}, 
so the latter form a linear basis of $\di\Lie\<X\>$ \cite{Loday93, Loday2001}.
\end{example}

\section{Ideal membership problem}\label{sec4}

As above, let $\Var $ be a variety of algebras (with one binary product) 
defined by polylinear identities, $\di\Var $ and $\tri\Var $
are the corresponding varieties of di- and tri-algebras. 

Suppose $X$ is a nonempty set of generators. 
By Theorem \ref{thm:FreeTrialg} and Remark \ref{rem:FreeDialg}, 
free systems $\tri\Var\<X\>$ and $\di\Var \<X\>$
may be considered as subspaces of $F=\Var \<X\cup \dot X\>$.
As in Section \ref{sec3}, let us denote these subspaces by $V$.
We will consider the case of tri-algebras in details. 

For every $S\subseteq V\subseteq F^{(3)}$ denote by $(S)^{(3)}$ the ideal of $V$
generated by $S$. 
Every tri-algebra $A\in \tri\Var $ may be presented 
by generators and defining relations as 
$A\simeq \tri\Var\<X\mid S\> \simeq  V / (S)^{(3)}$
for appropriate $X$ and $S$.
In order to understand the structure of $A$ we have to know how to decide whether 
a given $f\in V$ belongs to $(S)^{(3)}$. This kind of problems 
is the main target of the Gr\"obner---Shirshov basis (GSB) method. 
In order to translate GSB theory from the class $\Var $ to $\tri\Var $ 
(and to $\di\Var$) we need the following

\begin{theorem}\label{thm:IdealsReplication}
Let $S\subset V \subset F^{(3)}$.
Then 
\[
 (S)^{(3)} = (S\cup \varphi(S)) \cap V, 
\]
where $(P)$ stands for the ideal of $F$ generated by its subset $P$.
\end{theorem}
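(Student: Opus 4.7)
The plan is to establish the two inclusions $(S)^{(3)} \subseteq (S \cup \varphi(S)) \cap V$ and $(S \cup \varphi(S)) \cap V \subseteq (S)^{(3)}$ separately. A preliminary observation used in both directions is that the $F$-ideal $(S \cup \varphi(S))$ is stable under $\varphi$: since $\varphi$ is a $\Var$-homomorphism $F\to F$ with $\varphi^2=\varphi$, we have $\varphi(S \cup \varphi(S)) = \varphi(S)\subseteq S\cup \varphi(S)$, so $\varphi$ sends every context $\Phi[t]$ with $t\in S\cup \varphi(S)$ to $\varphi(\Phi)[\varphi(t)]$, which lies back in the ideal.

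The easy inclusion I would prove by induction on tri-algebra complexity. Since $(S)^{(3)}\subseteq V$ by definition, it suffices to show $(S)^{(3)}\subseteq (S\cup \varphi(S))$. Given $s$ already in $(S\cup \varphi(S))$ and $v\in V$, each of the six tri-algebra products $v\vdash s=\varphi(v)s$, $v\dashv s=v\varphi(s)$, $v\perp s=vs$ and their mirror versions is an ordinary $F$-product in which one factor lies in the $F$-ideal (invoking $\varphi$-stability whenever $\varphi(s)$ appears), hence stays in the ideal.

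For the hard inclusion I would apply the construction from the proof of Theorem~\ref{thm:FreeTrialg} to the tri-algebra $A:=V/(S)^{(3)}\in \tri\Var$ with $\alpha\colon X\to A$ given by $\alpha(x)=\dot x+(S)^{(3)}$. A check on generators shows that the resulting tri-algebra homomorphism $\chi\colon V\to A$ is the canonical quotient, so $\ker \chi=(S)^{(3)}$. The same construction simultaneously produces an auxiliary $\Var$-algebra $\hat A$, an embedding of tri-algebras $\iota\colon A\hookrightarrow C_2\otimes \hat A$, a $\Var$-homomorphism $\hat\psi\colon F\to \hat A$, and a tri-algebra homomorphism $\psi\colon F^{(3)}\to C_2\otimes \hat A$ satisfying $\psi(f)=e_1\otimes \hat\psi(\varphi(f))+e_2\otimes \hat\psi(f)$ together with $\psi|_V=\iota\circ \chi$.

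The crux is then the following. For $s\in S$ we have $\psi(s)=\iota(\chi(s))=0$, and the direct-sum splitting $C_2\otimes \hat A=(e_1\otimes \hat A)\oplus(e_2\otimes \hat A)$ forces $\hat\psi(s)=0$ and $\hat\psi(\varphi(s))=0$ separately. Thus the genuinely $\Var$-algebra homomorphism $\hat\psi$ vanishes on $S\cup \varphi(S)$, hence on the entire $F$-ideal $(S\cup \varphi(S))$; combined with the $\varphi$-stability of that ideal this yields $\psi(f)=e_1\otimes \hat\psi(\varphi(f))+e_2\otimes \hat\psi(f)=0$ for every $f\in (S\cup \varphi(S))$. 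For $f\in (S\cup \varphi(S))\cap V$ this gives $\iota(\chi(f))=0$, and injectivity of $\iota$ yields $f\in \ker \chi=(S)^{(3)}$. The main conceptual obstacle is that $\psi$ itself is only a tri-algebra homomorphism and its codomain carries no natural $\Var$-structure, so vanishing cannot be propagated along the $F$-ideal through $\psi$ alone; the device that resolves this is to route the argument through the honest $\Var$-homomorphism $\hat\psi$, using the $C_2$-splitting to extract from the single equation $\psi(s)=0$ two separate conditions on $\hat\psi$.
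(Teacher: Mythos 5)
Your proof is correct, but it takes a genuinely different route from the paper, at least for the nontrivial inclusion $(S\cup\varphi(S))\cap V\subseteq (S)^{(3)}$. The paper argues directly: it filters both ideals by multiplication depth, $I=\bigcup_s I_s$ with $I_{s+1}=I_s+FI_s+I_sF$, and $J=(S)^{(3)}=\bigcup_s J_s$ built from the six tri-products with $V$, and proves by induction that $I_s=J_s+\varphi(J_s)$, using $F=V+\Var\<X\>$, $\Var\<X\>=\varphi(V)$ and $V\cap\Var\<X\>=0$; intersecting with $V$ then gives the theorem at once. You instead recycle the machinery from the proof of Theorem~\ref{thm:FreeTrialg}, applied to $A=V/(S)^{(3)}$: you identify $\chi$ with the canonical projection (this step implicitly uses that $V$ is generated by $\dot X$ as a tri-algebra, Lemma~\ref{lem:F3-subalg}, so two tri-homomorphisms agreeing on $\dot X$ coincide), read off $\hat\psi(s)=\hat\psi(\varphi(s))=0$ from the splitting $C_2\otimes\hat A=(e_1\otimes\hat A)\oplus(e_2\otimes\hat A)$, propagate the vanishing along the $F$-ideal through the honest $\Var$-homomorphism $\hat\psi$ (your key observation, correctly isolated — $\psi$ alone would not suffice), and finish with injectivity of $\iota$. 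Both arguments are sound; the trade-off is that your functorial route avoids the filtration induction but leans on the $C_2\otimes\hat A$ construction, whereas the paper's computation yields the stronger structural fact $I=J+\varphi(J)$, in particular the $\varphi$-invariance of $I$ that the paper exploits immediately afterwards (Corollary~\ref{cor:TriFactor}). Your easy inclusion, together with the $\varphi$-stability observation, essentially reproduces one half of the paper's induction.
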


Here $\varphi $ is the endomorphism of $F$ defined in Section~\ref{sec3}.

\begin{proof}
Denote $I=(S\cup \varphi (S))$. Obviously, 
\[
 I = \bigcup\limits_{s\ge 0} I_s, \quad I_0\subseteq I_1\subseteq \dots, 
\]
where 
$I_0 = \mathrm{span}\,(S\cup \varphi(S))$,
\[
 I_{s+1} = I_s + FI_s + I_sF, \quad s\ge 0.
\]

Similarly, for $J = (S)^{(3)}$ we have 
\[
 J = \bigcup\limits_{s\ge 0} J_s, \quad J_0\subseteq J_1\subseteq \dots, 
\]
where 
$J_0 = \mathrm{span}\,(S)$,
\[
 J_{s+1} = J_s + V\vdash J_s + J_s\dashv V + V\dashv J_s + J_s\vdash V + J_s\perp V + V\perp J_s, \quad s\ge 0.
\]

Since $S\subset V$, 
$\varphi(S) \subset \Var \<X\>$, and $\Var\<X\>\cap V = 0$, we have  
$I_0\cap V = J_0$.
Moreover, $I_0 = J_0+ I_0'$, where $I_0'=\mathrm{span}\,\varphi(S)=\varphi(J_0)\subseteq \Var\<X\>$.

Assume $I_s = J_s + \varphi(J_s)$ for some $s\ge 0$.
Note that $F=V+ \Var\<X\>$ and $\Var\<X\> = \varphi(V)$ by Lemma~\ref{lem:F3-subalg}. Then 
\begin{multline}\nonumber
 I_{s+1} = I_s + (V+ \Var\<X\>)I_s + I_s(V+ \Var\<X\>)\\
 = J_s  + VJ_s + V\varphi(J_s) + \Var\<X\>J_s + J_sV + \varphi(J_s)V + J_s\Var\<X\> \\
  + \varphi(J_s)+ \varphi(J_s)\Var\<X\> +\Var\<X\>\varphi(J_s).
\end{multline}
It remains to note that 
\[
\begin{gathered}
 VJ_s = V\perp J_s,\quad V\varphi(J_s) = V\dashv J_s, \quad \Var\<X\>J_s=V\dashv J_s, \\
 J_sV = J_s\perp V,\quad \varphi(J_s)V = J_s\dashv V, \quad J_s\Var\<X\>=J_s\dashv V. \\
\end{gathered}
\]
Hence, 
\[
 I_{s+1} = J_{s+1} + \varphi(J_s)+ \varphi(J_s)\Var\<X\> +\Var\<X\>\varphi(J_s),
\]
but the latter three summands obviously give $\varphi(J_{s+1})$. 

We have proved $I_s = J_s+\varphi(J_s)$ for all $s\ge 0$. Therefore, 
$I_s\cap V = J_s$, and $I\cap V=J$, as required.
\end{proof}

\begin{remark}\label{rem:IdealsReplication}
 For di-algebras, the statement of Theorem~\ref{thm:IdealsReplication} holds true:
 the intersection of $V\simeq \di\Var\<X\>$ with the ideal generated 
 by $S\cup \varphi(S)$ in $F$ is equal to the ideal generated by~$S$ in the di-algebra~$V$.
\end{remark}

Let $S\subset \tri\Var\<X\>$. Then $I=(S\cup \varphi(S))$ is a $\varphi$-invariant ideal of $F $, 
so one may induce tri-algebra structure on $F/I$.

\begin{corollary}\label{cor:TriFactor}
$\tri\Var\<X\mid S\>$ is isomorphic to the subalgebra of $(F/I)^{(3)}$ generated by $\dot X$. 
Similar statement holds for di-algebras.
\end{corollary}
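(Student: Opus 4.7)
The plan is to promote the endomorphism $\varphi$ to the quotient $F/I$ and then identify $V/(S)^{(3)}$ with a subalgebra of $(F/I)^{(3)}$ via the natural map.

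First I would check that $I=(S\cup\varphi(S))$ is $\varphi$-invariant. Since $\varphi$ is an algebra endomorphism of $F$, $\varphi(I)$ is contained in the ideal generated by $\varphi(S)\cup\varphi^2(S)$; but $\varphi^2=\varphi$, so $\varphi(I)\subseteq I$. Hence $\varphi$ descends to a well-defined endomorphism $\bar\varphi$ of $F/I$ with $\bar\varphi^2=\bar\varphi$. In particular $\bar\varphi$ is a homomorphic averaging operator on the $\Var$-algebra $F/I$, so by the same reasoning as in Lemma~\ref{lem:F3-ave} the three operations
\[
(u+I)\vdash(v+I)=\varphi(u)v+I,\quad (u+I)\dashv(v+I)=u\varphi(v)+I,\quad (u+I)\perp(v+I)=uv+I
\]
turn $F/I$ into a tri-algebra $(F/I)^{(3)}\in\tri\Var$.

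Next I would consider the composition $\pi\colon V\hookrightarrow F^{(3)}\twoheadrightarrow (F/I)^{(3)}$, where the first arrow is the tri-algebra inclusion from Lemma~\ref{lem:F3-subalg} and the second is the tri-algebra quotient just constructed. The formulas defining $\vdash,\dashv,\perp$ on $V$ and on $F/I$ agree under the canonical map $u\mapsto u+I$, so $\pi$ is a tri-algebra homomorphism. Its kernel is $V\cap I$, which by Theorem~\ref{thm:IdealsReplication} equals $(S)^{(3)}$. Hence $\pi$ factors through an injective tri-algebra homomorphism
\[
\bar\pi\colon V/(S)^{(3)}\hookrightarrow (F/I)^{(3)}.
\]
Since $V=\tri\Var\<X\>$ is generated as a tri-algebra by $\dot X$ (Theorem~\ref{thm:FreeTrialg} combined with Lemma~\ref{lem:F3-subalg}), the image of $\bar\pi$ is precisely the subalgebra of $(F/I)^{(3)}$ generated by the images of $\dot X$. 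Combined with the identification $\tri\Var\<X\mid S\>\simeq V/(S)^{(3)}$ recalled at the start of Section~\ref{sec4}, this gives the claimed isomorphism.

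For the di-algebra statement I would repeat the argument verbatim, using only the two operations $\vdash,\dashv$ on $F$ as in Remark~\ref{rem:FreeDialg}, letting $V$ be the subspace of elements linear in $\dot X$, and invoking Remark~\ref{rem:IdealsReplication} in place of Theorem~\ref{thm:IdealsReplication}. I do not expect any serious obstacle: the only point that requires a moment of care is the $\varphi$-invariance of $I$, which is what makes it legitimate to transport the averaging construction of Section~\ref{sec3} from $F$ to $F/I$; everything else is a purely formal consequence of Theorem~\ref{thm:IdealsReplication}.
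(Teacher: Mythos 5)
Your proposal is correct and follows exactly the route the paper intends: the $\varphi$-invariance of $I$ (via $\varphi^2=\varphi$) induces the averaging operator, hence the tri-algebra structure on $F/I$, and Theorem~\ref{thm:IdealsReplication} identifies the kernel of $V\to (F/I)^{(3)}$ with $(S)^{(3)}$, so the induced map embeds $V/(S)^{(3)}\simeq\tri\Var\<X\mid S\>$ onto the subalgebra generated by the image of $\dot X$. The paper leaves these steps implicit in the sentence preceding the corollary, and your write-up simply makes them explicit, including the analogous di-algebra case via Remark~\ref{rem:IdealsReplication}.
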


Theorem \ref{thm:IdealsReplication} and Remark \ref{rem:IdealsReplication} provide an easy approach to GSB theory 
for the classes of tri- and di-algebras ($\tri\Var$ and $\di\Var$, respectively)  modulo the analogous theory for 
the variety $\Var $.
In order to find Gr\"obner---Shirshov basis of an ideal generated by $S\subseteq \tri\Var\<X\>$
one should rewrite the relations from $S$ as elements of $F=\Var\<X\cup \dot X\>$, 
and find the GSB $\hat S$ of the ideal in $F$ generated by $S\cup \varphi(S)$. 
(In the case of di-algebras, it is enough to find a part of the latter GSB, namely, 
those polynomials of degree $\le 1$ in $\dot X$.)
To find a linear basis of $\tri\Lie\<X\mid S\>$ one should just 
consider $\hat S$-reduced monomials in $F$ and 
choose those that belong to $V$.

In order to present an example, let us recall the main features of the Gr\"obner---Shirshov bases 
theory for Lie algebras \cite{Shir62} (see also \cite{Bok72}). 

First, suppose $X$ is a linearly ordered set of generators, $X^*$ is the set of all associative words in $X$ equipped with {\em deg-lex order}, 
i.e., two words are first compared by their length and then lexicographically.
The set of associative {\em Lyndon---Shirshov\/} words (LS-words) consists of 
all such words $u$ that for every presentation $u=vw$, $v,w\in X^*$, we have $u>wv$.
Every associative LS-word $u$ has a unique {\em standard\/} bracketing $[u]$ such that 
$[u]=[[v][w]]$, where $w$ is the longest proper LS-suffix of~$u$. 
An associative LS-word with standard bracketing is called a non-associative LS-word;  linear order on such words 
is induced by the deg-lex order on $X^*$.

The set of all non-associative LS-words in the alphabet $X$ is a linear basis of 
the free Lie algebra $\Lie\<X\>$. Given $0\ne f\in \Lie\<X\>$, $\bar f$ denotes its principal non-associative LS-word.

Next, recall the notion of a composition. For every associative LS-words $w$ and $v$ such that $w=uvu'$
for some $u$ and $u'$ (they may be empty) 
there exists unique bracketing $\{u{*}u'\}$ on the word $u{*}u'$ in the alphabet $X\cup \{*\}$
such that 
\[
 \overline{\{u[v]u'\}} = [w]\in \Lie \<X\>.
\]
Suppose $f$ and $g$ are to monic elements from $\Lie\<X\>$, $\bar f=[w]$, $\bar g=[v]$. 
If $ w=uvu'$ as above, then we say that $f$ and $g$ have a {\em composition of inclusion} 
\[
 (f,g)_w = f - \{u g u'\}.
\]
If $w=uu'$ and $v=u'u''$ for some $u,u',u''\in X^*$ then we say that $f$ and $g$ have a {\em composition of intersection}
\[
 (f,g)_{uu'u''} = \{fu''\} - \{ug\}.
\]

Finally, recall the definition of a Gr\"obner---Shirshov basis (GSB) for Lie algebras. 
A set of monic elements $S\subset \Lie \<X\>$ is said to be a GSB in $\Lie \<X\>$ 
if for every $f,g\in S$ every their composition $(f,g)_w$ is {\em trivial}, i.e., may be presented as
\[
 (f,g)_w = \sum\limits_i \alpha_i \{u_i s_i u_i'\}, \quad s_i\in S,\ 
\]
where $\overline{\{u_i s_i u_i'\}}=[u_i\bar s_i u_i']<[w]$.

A non-associative LS-word $[w]$ is said to be {\em $S$-reduced} if $w$
may not be presented as $w=uvu'$, where $[v]=\bar s$ for some $s\in S$.

\begin{theorem}[CD-Lemma, \cite{Bok72, Shir62}]
 For a set of monic elements $S\subseteq \Lie\<X\>$
 denote $(S)$ the ideal generated by~$S$. Then the following conditions 
 are equivalent:
 \begin{enumerate}
  \item $S$ is a GSB in $\Lie\<X\>$; 
  \item $f\in (S)$ implies $\bar f$ is not $S$-reduced;
  \item the images of $S$-reduced words form a linear basis of $\Lie\<X\>/(S)$.
 \end{enumerate}
\end{theorem}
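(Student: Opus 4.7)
The plan is to prove the cyclic chain $(1)\Rightarrow (2)\Rightarrow (3)\Rightarrow (1)$, with every step driven by Noetherian induction along the deg-lex order on associative LS-words (this order is a well-order on words of bounded length in a finite alphabet, and, more importantly for the argument, it is compatible with the elimination process). The central quantity to track is, for a given $f\in (S)$, an expression $f=\sum_i \alpha_i\{u_i s_i u_i'\}$ with $s_i\in S$, together with the multiset of its summand leading words $[u_i\bar{s_i}u_i']$.

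For the key implication $(1)\Rightarrow (2)$, fix $f\in (S)$ and choose a presentation as above whose multiset of summand leading words is minimal in lex order. Let $[w]=\max_i[u_i\bar{s_i}u_i']$. If a single index attains $[w]$, then there is no cancellation in the leading term, so $\bar f=[w]$, which is not $S$-reduced and we are done. Otherwise two summands indexed by $j,k$ satisfy $u_j\bar{s_j}u_j'=u_k\bar{s_k}u_k'=w$. The two occurrences of $\bar{s_j}$ and $\bar{s_k}$ inside $w$ are either nested (yielding a composition of inclusion) or overlap properly (composition of intersection). Invoking the assumed triviality of $(s_j,s_k)_w$ modulo $S$ together with Shirshov's lemma that any non-associative bracketing of an LS-word $w$ coincides with the standard bracketing $[w]$ modulo a combination of bracketings of strictly smaller words, one rewrites $\alpha_j\{u_j s_j u_j'\}+\alpha_k\{u_k s_k u_k'\}$ as a linear combination whose summand leading words are all strictly below $[w]$. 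This contradicts the minimality of our chosen presentation.

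The remaining two implications are routine. For $(2)\Rightarrow (3)$: the images of $S$-reduced LS-words span $\Lie\<X\>/(S)$ by iterated elimination of the leading word (a step which, by the hypothesis and the definition of reducibility, produces a polynomial with strictly smaller $\bar f$, so the procedure terminates); they are linearly independent because any nontrivial linear relation in the quotient would lift to an element of $(S)$ whose leading word is $S$-reduced, contradicting~(2). For $(3)\Rightarrow (1)$: each composition $(f,g)_w$ belongs to $(S)$ and, by direct inspection of the definition, has leading word strictly below $[w]$; expressing it in the basis of $S$-reduced words via successive elimination yields a representation of the precise shape required by the definition of triviality.

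The principal obstacle is the two-summand cancellation step inside $(1)\Rightarrow (2)$. The subtlety is that two different non-associative bracketings of the same associative word $w$ need not coincide as Lie monomials; they merely agree modulo bracketings of strictly smaller associative words. To turn the triviality of the composition $(s_j,s_k)_w$ into a reduction of the pair $\{u_j s_j u_j'\}-\{u_k s_k u_k'\}$, one must combine the composition-triviality hypothesis with Shirshov's lemma on elimination of leading words in non-associative LS-polynomials. Controlling this interplay uniformly, so that every rewriting genuinely decreases the multiset of leading words, is the technical core of the original proof and the only step where the specific combinatorics of Lyndon--Shirshov words is essential.
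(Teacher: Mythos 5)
The paper does not prove this statement at all: it is quoted as the classical Composition--Diamond Lemma with references to Shirshov and Bokut, so your attempt can only be measured against that classical argument -- whose overall skeleton ($(1)\Rightarrow(2)\Rightarrow(3)\Rightarrow(1)$, minimal presentations, cancellation of equal leading words, reduction/independence for $(2)\Rightarrow(3)$ and $(3)\Rightarrow(1)$) you do reproduce correctly. However, there are two genuine gaps in the core step. First, your case analysis is incomplete: if two summands satisfy $u_j\bar s_j u_j'=u_k\bar s_k u_k'=w$, the occurrences of $\bar s_j$ and $\bar s_k$ in $w$ need not be nested or properly overlapping -- they can be disjoint, e.g. $w=\bar s_j\,a\,\bar s_k$ with $a\in X^*$. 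In that case there is no composition at all, and triviality of compositions gives you nothing; the classical proof handles it by a separate argument, rewriting $\{u_j s_j u_j'\}-\{u_k s_k u_k'\}$ through normal $S$-words in which one $s$ is replaced by its lower part $s-\bar s$, which strictly decreases the leading words. As written, your minimality contradiction simply cannot be reached in this case, so $(1)\Rightarrow(2)$ is not established.

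Second, you assume from the start that every $f\in(S)$ admits a presentation $f=\sum_i\alpha_i\{u_i s_i u_i'\}$ with the special bracketings $\{u\,{*}\,u'\}$. In the associative setting this is immediate, but in the Lie setting the ideal is generated under arbitrary Lie multiplications, so a general element is a combination of monomials containing some $s\in S$ under an arbitrary bracketing; that every such $s$-word is a linear combination of normal $S$-words $\{usu'\}$ (with controlled leading words) is itself a lemma of Shirshov, proved by the same LS-word combinatorics you defer to, and without it your induction has no starting point. A smaller point: deg-lex on $X^*$ is a well order only when $X$ is well-ordered, not merely linearly ordered or finite, and both your choice of a minimal presentation and the termination of the elimination procedure in $(2)\Rightarrow(3)$ rely on this. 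The implications $(2)\Rightarrow(3)$ and $(3)\Rightarrow(1)$ are otherwise argued correctly, modulo the same normal-$S$-word lemma.
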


For associative algebras, one may just ``erase brackets'' in all definitions
and statements  \cite{Berg78, Bok76}.

\begin{example}
 Let $X=\{x,y\}$. Consider the Leibniz algebra $L\in \di\Lie $ 
 generated by $X$ with one defining relation 
 \[
  f = [x\dashv y] + [y\dashv x] + y.
 \]
\end{example}

Define the order on $X\cup \dot X$ by $\dot x>\dot y>x>y$.
According to the general scheme, denote $F=\Lie\<X\cup \dot X\>$.
Then $f$ may be interpreted as $[\dot x y]+[\dot yx] +\dot y\in F$, 
and $\varphi(f)=y$.
Obviously, $f$ and $\varphi (f)$ have a composition of inclusion $(f,\varphi(f))_{[\dot xy]} = [\dot yx] +\dot y$, 
and the latter has no more compositions with $\varphi(f)$. 
Therefore, 
\[
 y,\quad  [\dot yx] +\dot y
\]
is a GSB in $F$. The linear basis of $\di\Lie \<x,y \mid f \>$ consists of all those non-associative 
Lyndon---Shirshov words in $\Lie \<\dot x, \dot y, x, y \>$ that are linear in $\dot X$
that do not contain $y$ or $\dot y x$ as (associative) subwords. Obviously, these are 
\[
 \dot y,\quad [\dots [[\dot x x]x]\dots x].
\]
Hence, the linear basis of $L$ consists of 
\[
 y, \quad [\dots [[ x\dashv  x]\dashv x]\dashv \dots\dashv x ].
\]

\section{Applications}\label{sec5}

Gr\"obner bases in commutative algebra are known as an efficient tool for solving computational problems. 
In non-commutative (and non-associative) settings, GSB technique is mainly used for solving theoretical 
problems. A wide family of such problems is related with the structure of universal envelopes. 

Namely, suppose $\Var_1$ and $\Var_2$ are two varieties of algebras, and let
$\omega $ be a functor from $\Var_1$ to $\Var_2$ which turns $A\in \Var_1$ into $A^{(\omega )}\in \Var_2$, 
where $A^{(\omega )}$ is the same linear space as $A$ equipped with algebraic operations
expressed in terms of operations in $\Var_1$. 
(In terms of operads, this exactly means that the functor $\omega $ is induced by a morphism 
of corresponding operads, see, e.g., \cite{GK94}.
For example, the well-known functor ${-}:\As\to \Lie$ turns an associative algebra $A$ into Lie algebra $A^{(-)}$ 
with new operation $[ab]=ab-ba$, $a,b\in A$.)
Then for every $B\in \Var_2$ there exists unique (up to isomorphism) universal enveloping algebra 
$U_\omega (B)\in \Var_1$. The most natural way to construct $U_\omega (B)$ is to consider a linear basis $X$
of $B$ and express the multiplication table of $B$ as a system $S$ of defining relations in $\Var_1\<X\>$.
Then $U_\omega (B)\simeq \Var_1\<X\mid S\>$. In this section, we consider two such functors
on the varieties $\tri\Lie $ and $\tri\As$, and determine 
the structure of corresponding universal envelopes:
\begin{itemize}
 \item forgetful functor $\perp: \tri\Lie \to \Lie$, $[ab]=[a\perp b]$;
 \item tri-commutator functor $-: \tri\As \to \tri\Lie $, where 
 \begin{equation}\label{eq:triCommutator}
[a\vdash b]=a\vdash b-b\dashv a,\quad [a\dashv b]=a\dashv b-b\vdash a,\quad [a\perp b] = a\perp b - b\perp a.  
 \end{equation}
\end{itemize}

Let $L$ be a Lie algebra with linear basis $X$ and multiplication table $\mu : X\times X \to \Bbbk X$, 
$\mu(x,y)$ is a linear form in $X$ for every $x,y\in X$. Assume $X$ is linearly ordered.
Then $U_{\perp}(L)\simeq \tri\Lie \< X \mid S\>$,
where $S=\{ [x\perp y] - \mu(x,y)\mid  x,y\in X, x>y \}$. 
According to the scheme described in Section~\ref{sec4}, we have to consider 
\[
 U = \Lie \<X\cup \dot X \mid [\dot x \dot y]-\dot \mu(x,y), [xy]-\mu(x,y), x,y\in X, x>y \>,
\]
where $\dot\mu(x,y) = \sum_i \alpha_i \dot z_i$
for 
$\mu(x,y) = \sum_i \alpha_i z_i$.

Note that polynomials from $S=\{[\dot x \dot y]-\dot \mu(x,y) \mid x,y\in X, x>y \}$ 
do not have compositions, so $S$
is a GSB. The same applies to $\varphi (S)=\{[xy]-\mu(x,y) \mid x,y\in X, x>y \}$.
Moreover, polynomials from $S$ and $\varphi(S)$
have no compositions since they depend on different variables. Hence, 
$S\cup \varphi(S)$ is a GSB and 
$U$ is isomorphic to the free product $\dot L* L$ \cite{Shir62},
where $\dot L$ is the isomorphic copy of $L$. Corollary \ref{cor:TriFactor} implies

\begin{theorem}\label{thm:Lperp}
 The universal enveloping Lie tri-algebra $U_{\perp}(L)$ of a given Lie algebra $L$ 
 is isomorphic as a linear space to the ideal of $\dot L*L$ generated by~$\dot L$.
\end{theorem}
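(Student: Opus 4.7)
The plan is to derive the theorem as a direct application of Corollary~\ref{cor:TriFactor}, combined with the identification $F/I \simeq \dot L * L$ already sketched in the paragraph preceding the statement. Concretely, write $F = \Lie\langle X\cup\dot X\rangle$ and $I = (S\cup\varphi(S))$, where $S = \{[\dot x\dot y] - \dot\mu(x,y)\mid x>y\}$ and $\varphi(S) = \{[xy] - \mu(x,y)\mid x>y\}$. By Corollary~\ref{cor:TriFactor}, $U_{\perp}(L) = \tri\Lie\langle X\mid S\rangle$ is isomorphic to the tri-subalgebra of $(F/I)^{(3)}$ generated by the images of $\dot X$; since $F/I \simeq \dot L * L$ as Lie algebras, it remains only to identify this tri-subalgebra inside $(\dot L * L)^{(3)}$.

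For that identification I would rerun the argument of Lemma~\ref{lem:F3-subalg} in the quotient. The ideal $I$ is $\varphi$-invariant, so $\varphi$ descends to an idempotent Lie endomorphism of $\dot L * L$ whose image is the embedded copy of $L$ and whose kernel is the two-sided ideal generated by $\dot X$. The inductive proof of Lemma~\ref{lem:F3-subalg}---that the tri-subalgebra generated by $\dot X$ coincides with the subspace spanned by monomials of positive degree in $\dot X$---uses only the existence of such an idempotent averaging endomorphism together with the defining formulas~\eqref{eq:OperAve} for $\vdash, \dashv, \perp$, so it carries over verbatim. The resulting subspace is exactly the ideal of $\dot L * L$ generated by $\dot X$, and hence (since $\dot X$ generates $\dot L$ as a Lie algebra) the ideal generated by $\dot L$.

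The genuinely nontrivial step is the one already executed in the text immediately before the statement: verifying that $S\cup\varphi(S)$ is a Gr\"obner--Shirshov basis of $F$. Mutual compositions between $S$ and $\varphi(S)$ cannot occur because the two sets depend on disjoint alphabets, and the internal compositions within each set reduce to the Jacobi identity, which holds in $L$ by hypothesis; I would simply invoke this observation. With the GSB in hand, the identification $F/I \simeq \dot L * L$ and the subsequent application of Corollary~\ref{cor:TriFactor} are entirely formal, so the main potential pitfall is really just making sure the proof of Lemma~\ref{lem:F3-subalg} transfers cleanly to the quotient---which, as noted above, it does because that proof never used freeness of $F$, only the averaging property of $\varphi$.
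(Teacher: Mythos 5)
Your proposal is correct and follows essentially the same route as the paper: show $S\cup\varphi(S)$ is a Gr\"obner---Shirshov basis so that $F/I\simeq \dot L * L$ (your observation that the compositions inside each alphabet are trivial precisely because $\mu$ is anticommutative and satisfies Jacobi is the accurate version of the paper's terser remark), then apply Corollary~\ref{cor:TriFactor} and identify the tri-subalgebra of $(F/I)^{(3)}$ generated by $\dot X$ with the ideal of $\dot L * L$ generated by $\dot L$, exactly as in the text. One harmless slip: the descended map $\varphi$ on $\dot L * L$ sends $\dot a\mapsto a$, so its kernel is the ideal generated by the elements $\dot a - a$, not the ideal generated by $\dot X$ (which $\varphi$ maps onto $L$); the ideal generated by $\dot L$ is the kernel of the other projection, the one annihilating $\dot L$, but your argument never actually uses this kernel --- it only needs that the subalgebra of $(F/I)^{(3)}$ generated by $\dot X$ is the image of $V$, which by Theorem~\ref{thm:IdealsReplication} and Lemma~\ref{lem:F3-subalg} is the ideal of $\dot L * L$ generated by the image of $\dot X$.
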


\begin{corollary}
The pair of varieties $(\tri\Lie, \Lie )$ is a PBW-pair in the sense of \cite{MikhShest14}.
\end{corollary}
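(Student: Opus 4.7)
The plan is to deduce the corollary from Theorem~\ref{thm:Lperp} together with the explicit Gr\"obner---Shirshov basis produced in its proof. Recall that a pair of varieties $(\mathcal V_1,\mathcal V_2)$ with a comparison functor $\omega$ is called a PBW-pair in the sense of \cite{MikhShest14} when, for every $A\in\mathcal V_2$, the universal envelope $U_\omega(A)\in\mathcal V_1$ carries a natural filtration induced by one on $A$, and its associated graded is canonically isomorphic to $U_\omega(A^{(0)})$, where $A^{(0)}$ denotes the algebra on the same vector space with zero multiplication. Equivalently, $U_\omega(A)$ admits a standard linear basis whose index set depends only on the underlying ordered basis of~$A$.

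First, I would invoke Theorem~\ref{thm:Lperp} together with the construction preceding it: after fixing a linearly ordered basis $X$ of $L$, one has $U_\perp(L)\cong V\cap I$, where $V\subset F=\Lie\<X\cup\dot X\>$ is the subspace spanned by monomials of positive $\dot X$-degree, and $I$ is the ideal of $F$ admitting the Gr\"obner---Shirshov basis
\[
S\cup\varphi(S)=\{[\dot x\dot y]-\dot\mu(x,y),\ [xy]-\mu(x,y)\mid x,y\in X,\ x>y\}.
\]
By the Composition-Diamond Lemma for Lie algebras, a linear basis of $U_\perp(L)$ then consists exactly of those non-associative Lyndon---Shirshov words in $F$ that have positive $\dot X$-degree and are $S\cup\varphi(S)$-reduced.

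The key observation is that the leading monomials of this GSB are $[\dot x\dot y]$ and $[xy]$ for $x>y$, and these quantities do not involve the structure constants~$\mu$. Consequently, the set of reduced LS-words labelling a basis of $U_\perp(L)$ depends only on the ordered set $X$ of generators, and replacing $\mu$ by~$0$ (i.e.\ passing to the abelian Lie algebra $L^{(0)}$ on the same space) yields the very same combinatorial basis. This produces a canonical linear isomorphism $U_\perp(L)\cong U_\perp(L^{(0)})$, which is exactly the PBW-pair property. If one prefers the filtered formulation, equipping $L$ with the standard filtration by a flag through $X$ and filtering $F$ compatibly by total degree in $X\cup\dot X$ upgrades the identification to an isomorphism $\mathrm{gr}\,U_\perp(L)\cong U_\perp(L^{(0)})$.

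I expect the only real obstacle to be cosmetic: matching the chosen wording with the precise definition of PBW-pair employed in \cite{MikhShest14}, and verifying that the filtration on $F$ described above preserves the leading-term condition on $S\cup\varphi(S)$. The substantive content — that the reduced-word basis of $U_\perp(L)$ is independent of $\mu$ — has essentially been carried out already in the proof of Theorem~\ref{thm:Lperp}.
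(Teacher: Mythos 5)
Your argument is correct and takes essentially the same route as the paper: the paper also deduces the corollary from Theorem~\ref{thm:Lperp}, noting that $L$ embeds into $U_{\perp}(L)$ and that the reduced-word basis arising from the Gr\"obner---Shirshov basis $S\cup\varphi(S)$ (whose leading words $[\dot x\dot y]$, $[xy]$ involve no structure constants) does not depend on the multiplication table of $L$. One small slip: $U_{\perp}(L)$ is the image of $V$ in $F/I$, i.e.\ $V/(V\cap I)$, not $V\cap I$; the basis description by reduced Lyndon---Shirshov words of positive $\dot X$-degree that you actually use is the correct one.
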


Indeed, $L$ embeds into $U_{\perp}(L)$ and there exists a basis of $U_{\perp}(L)$ which does not depend on 
the particular multiplication table of $L$.

Now, let $A$ be an associative tri-algebra with operations $\vdash$, $\dashv $, and $\perp $.
Then $A^{(-)}$ with new operations \eqref{eq:triCommutator} is known to be a Lie tri-algebra. 
Given $L\in \tri\Lie$, denote its universal enveloping associative tri-algebra by $U_-(L)$.
The structure of $U_-(L)$ was studied in \cite{GubKol2014}. Let us show how to apply GSB approach to get 
the same result. A similar computation for di-algebras was performed in \cite{BCL10} in the 
framework of GSB theory for associative di-algebras developed in that paper. Our aim is to show that 
the approach proposed in Section~\ref{sec4} allows to solve such problems with shorter computations.

Suppose $L\in \tri\Lie$, $L_0=\mathrm{span}\,\{[a\vdash b]-[a\dashv b],\, [a\vdash b]-[a\perp b]\mid a,b\in L \}$
as in Section~\ref{sec3}, and let $X$ be a basis of $L$ such that $X=X_0\cup X_1$, $X_0$ is a basis of $L_0$.
Denote by $\mu_\vdash$, $\mu_\dashv$, and $\mu_\perp$ the linear forms corresponding to the operations on $L$.
Since $\mu_\vdash(x,y)=-\mu_\dashv(y,x)$ and $\mu_\perp (x,y)=-\mu_\perp(y,x)$, we have to consider 
\[
 U = \As\<X\cup \dot X \mid S \cup \varphi(S) \>, 
\]
where $S=S_\dashv \cup S_\perp$, 
\[
\begin{gathered}
 S_\dashv = \{\dot xy-y\dot x - \dot \mu_\dashv(x,y) \mid x,y\in X\},\\
 S_\perp = \{\dot x\dot y - \dot y\dot x - \dot \mu_\perp(x,y) \mid x,y\in X, x>y \}.
\end{gathered} 
\]
Note that 
\[
 \varphi(S_\dashv) = \{xy-yx - \mu_\dashv(x,y) \mid x,y\in X\} = \bar S_\dashv\cup \bar S_\vdash \cup \bar S_0,
\]
where 
\[
\begin{gathered}
 \bar S_\dashv = \{xy-yx - \mu_\dashv(x,y) \mid x,y\in X, x>y\}, \\
 \bar S_\vdash = \{xy-yx + \mu_\dashv(y,x) \mid x,y\in X, x>y\}, \\
 \bar S_0 = \{\mu_\dashv(x,x) \mid x\in X\},
\end{gathered}
\]
and
\[
 \varphi(S_\perp ) = \{xy - yx - \mu_\perp(x,y) \mid x,y\in X, x>y \}.
\]
Hence, the elements of $\varphi (S)$ have the following compositions of inclusion: 
\[
 \mu_\dashv(x,y)+\mu_\dashv(y,x), \quad \mu_\dashv(x,y) - \mu_\perp(x,y), \quad \mu_\dashv(y,x)+\mu_\perp (x,y), \quad \mu_\dashv(x,x),
\]
where $x>y$. Obviously, the linear space of these compositions coincides with $L_0$, so we may add letters $X_0$ to the defining relations. 
Since $\mu_\dashv (X,X_0)=0$ in $L$ and $\mu_\dashv (X_0,X)\subset L_0$, it is enough to consider the following defining relations in $\As\<X\cup \dot X\>$:
\begin{gather}
 x,\quad x\in X_0; \label{eq:0dotCom} \\
 xy - yx - \mu_\dashv (x,y),\quad x,y\in X_1,\ x>y; \label{eq:00dotCom} \\
 \dot x y - y\dot x - \dot\mu_\dashv (x,y),\quad x\in X,\ y\in X_1; \label{eq:1dotCom} \\
 \dot x\dot y - \dot y \dot x - \dot \mu_\perp(x,y),\quad x,y\in X,\ x>y. \label{eq:2dotCom}
\end{gather}
Let us use the same order on $X\cup \dot X$ as in Example \ref{exmp:FreeLeibniz} along with deg-lex order on $(X\cup\dot X)^*$.

\begin{theorem}\label{thm:TriAs_GSB}
 Relations \eqref{eq:0dotCom}--\eqref{eq:2dotCom} form a GSB in $\As\<X\cup \dot X\>$.
\end{theorem}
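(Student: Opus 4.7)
The plan is to apply the Composition--Diamond Lemma for associative algebras by enumerating all compositions of intersection and inclusion between the leading monomials of \eqref{eq:0dotCom}--\eqref{eq:2dotCom} and verifying each is trivial modulo the set. With the chosen order ($\dot x>\dot y>x>y$ whenever $x>y$ in $X$, extended to deg-lex on words), the leading monomials are $x$ for $x\in X_0$, $xy$ for $x>y$ in $X_1$, $\dot x y$ for $x\in X$ and $y\in X_1$, and $\dot x\dot y$ for $x>y$ in $X$. I would first observe that \eqref{eq:0dotCom} admits no compositions: its leading letter lies in $X_0$, whereas every position in the length-two leading monomials of the others is occupied by a letter of $X_1\sqcup\dot X$, disjoint from $X_0$. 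A direct case analysis of which first/last letters can coincide then shows that the remaining composable pairs are exactly four intersection compositions at length-three words: \eqref{eq:00dotCom}--\eqref{eq:00dotCom} at $xyz$ with $x>y>z$ in $X_1$; \eqref{eq:1dotCom}--\eqref{eq:00dotCom} at $\dot x yz$ with $x\in X$ and $y>z$ in $X_1$; \eqref{eq:2dotCom}--\eqref{eq:1dotCom} at $\dot a\dot b z$ with $a>b$ in $X$ and $z\in X_1$; and \eqref{eq:2dotCom}--\eqref{eq:2dotCom} at $\dot a\dot b\dot c$ with $a>b>c$ in $X$.

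For each of these, I would normalize the resulting polynomial by repeatedly applying \eqref{eq:00dotCom} to sort adjacent $X_1$-pairs, \eqref{eq:2dotCom} to sort adjacent $\dot X$-pairs, and \eqref{eq:1dotCom} to push each dotted letter past every undotted one to the right, reducing every monomial to the normal form $y_1\cdots y_p\dot z_1\cdots\dot z_q$ (non-decreasing in each block). Coefficients of these normal-form words then collect iterated applications of $\mu_\dashv$ and $\mu_\perp$; using bilinearity, the iterates collapse to nested expressions of the shape $\mu_{?}(\mu_{?}(\cdot,\cdot),\cdot)$ or $\dot\mu_{?}(\mu_{?}(\cdot,\cdot),\cdot)$. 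The triviality of each composition then reduces to a specific Lie tri-algebra identity satisfied by $L$: the \eqref{eq:2dotCom}--\eqref{eq:2dotCom} composition becomes Jacobi for $\mu_\perp$, which holds because $(L,\mu_\perp)\in\Lie$; the \eqref{eq:1dotCom}--\eqref{eq:00dotCom} composition becomes the right Leibniz identity for $\mu_\dashv$, obtained by rewriting the left Leibniz axiom \eqref{eq:triLie-3} via the skew-symmetry $[a\vdash b]=-[b\dashv a]$; the \eqref{eq:2dotCom}--\eqref{eq:1dotCom} composition becomes the $\perp$--$\dashv$ compatibility coming from \eqref{eq:triLie-4} rewritten the same way; and the \eqref{eq:00dotCom}--\eqref{eq:00dotCom} composition collapses, modulo \eqref{eq:0dotCom}, to Jacobi for the Lie bracket induced on $\bar L=L/L_0$, where we use $\mu_\dashv\equiv\mu_\perp\pmod{L_0}$ and $\bar L\in\Lie$.

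The main technical obstacle is the bookkeeping in the normalization step: a single rewrite by \eqref{eq:1dotCom} replaces $\dot x y$ by $y\dot x+\dot\mu_\dashv(x,y)$, but the lower-order term is itself a linear combination of dotted letters, each of which must in turn be moved past further letters, triggering additional reductions producing further structure constants. Linearity of $\mu_\dashv$ and $\mu_\perp$ turns this cascade into a finite combinatorial process that organizes as a nested expression, but one must carefully track signs, orders, and the $X_0$-components of $\mu_\dashv$ (which do not vanish until \eqref{eq:0dotCom} is applied) so that the final tally matches exactly the relevant axiom of $L$. Once the four compositions are verified trivial, \eqref{eq:0dotCom}--\eqref{eq:2dotCom} is a GSB by the Composition--Diamond Lemma.
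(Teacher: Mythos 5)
Your proposal is correct and takes essentially the same route as the paper: you enumerate the same four families of intersection compositions (and rightly note that \eqref{eq:0dotCom} admits none), dismiss the \eqref{eq:00dotCom}--\eqref{eq:00dotCom} and \eqref{eq:2dotCom}--\eqref{eq:2dotCom} families via the multiplication tables of the Lie algebras $\bar L$ and $L^{(\perp)}$, and reduce the two mixed families to the right Leibniz identity for $\dashv$ and the $\dashv$--$\perp$ identity \eqref{eq:triLie-4}, which is exactly what the paper does (it only writes out the \eqref{eq:2dotCom}--\eqref{eq:1dotCom} computation explicitly). The remaining work you defer is routine bookkeeping of the kind shown in \eqref{eq:Composition}, so no essential gap remains.
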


\begin{proof}
 Relations \eqref{eq:00dotCom} correspond to the multiplication table of the Lie algebra $\bar L=L/L_0$, 
 so all their compositions of intersection are trivial. The same holds for \eqref{eq:2dotCom}: 
 it corresponds to the Lie algebra $L^{(\perp)}$.
 It remains to compute two families of compositions $(f,g)_w$:
 \begin{enumerate}
  \item $f=yz - zy - \mu_\dashv (y,z)$, $g=\dot x y - y\dot x - \dot\mu_\dashv (x,y)$, $w=\dot x y z$, 
  where $y,z\in Z_1$, $y>z$, $x\in X$;
  \item $\dot x\dot y - \dot y \dot x - \dot \mu_\perp(x,y)$, $g=\dot y z - z\dot y -\dot\mu_\dashv (y,z)$, $w=\dot x\dot y z$, $x,y\in X$, $x>y$, $z\in X_1$.
 \end{enumerate}
Let us compute in details the second one:
\[
 (f,g)_w = fz - \dot x g = -\dot y\dot x z -\dot\mu_\perp(x,y)z +\dot x z \dot y + \dot x\dot \mu_\dashv (y,z). 
\]
For $h,h'\in \As\<X\cup \dot X\>$, let us write
$h\equiv h'$ if $h-h' = \sum\limits_i \alpha_i u_is_iu_i'$, where $s_i$ belong to   \eqref{eq:0dotCom}--\eqref{eq:2dotCom}, $\alpha_i\in \Bbbk $,
and $u_i\bar s_i u_i'<w=\dot x\dot y z$.
Then 
\begin{multline}\label{eq:Composition} 
(f,g)_w 
\equiv 
 -\dot yz\dot x -\dot y\dot\mu_\dashv(x,z) -\dot\mu_\perp (x,y)z +z\dot x\dot y +\dot \mu_\dashv (x,z)\dot y +\dot x\dot\mu_\dashv(y,z) \\
\equiv 
 -\dot\mu_\dashv (y,z)\dot x -\dot y\dot\mu_\dashv(x,z) -\dot\mu_\perp (x,y)z + z\dot\mu_\perp(x,y) +\dot \mu_\dashv (x,z)\dot y +\dot x\dot\mu_\dashv(y,z)\\
 =
 \dot x\dot\mu_\dashv(y,z)-\dot\mu_\dashv (y,z)\dot x 
  +\dot \mu_\dashv (x,z)\dot y -\dot y\dot\mu_\dashv(x,z) 
  + z\dot\mu_\perp(x,y)  -\dot\mu_\perp (x,y)z \\
\equiv 
\dot\mu_\perp(x, \mu_\dashv (y,z)) + \dot\mu_\perp (\mu_\dashv (x,z), y) - \dot\mu_\dashv (\mu_\perp(x,y),z)
\end{multline}
The right-hand side of \eqref{eq:Composition} is equal to 
$[x\perp [y\dashv z] ]+[[x\dashv z ]\perp y] - [[x\perp y]\dashv z]\in L$
which is zero in every Lie tri-algebra.
\end{proof}

\begin{corollary}[\cite{GubKol2014}]\label{eq:TUbasis}
 As a linear space, $U_-(L)$ is isomorphic to $U(\bar L)\otimes U_0(L^{(\perp)} )$, 
 where $U(\bar L)$ is the usual universal enveloping associative algebra of the Lie algebra $\bar L=L/L_0$ 
 and $U_0(L^{(\perp )})$ is the augmentation ideal of $U(L^{(\perp )})$.
\end{corollary}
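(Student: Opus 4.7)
The plan is to read off the linear basis of $U_-(L)$ from the Gröbner--Shirshov basis produced in Theorem~\ref{thm:TriAs_GSB}, identify the reduced monomials belonging to $V$, and recognize the two factors as the stated enveloping algebras.

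First I would list the leading monomials of the relations \eqref{eq:0dotCom}--\eqref{eq:2dotCom} under the deg-lex order (with $\dot X>X$ and $\dot x>\dot y$ iff $x>y$): the letter $x$ for $x\in X_0$; the word $xy$ for $x,y\in X_1$ with $x>y$; the word $\dot x y$ for $x\in X$, $y\in X_1$; and $\dot x\dot y$ for $x,y\in X$ with $x>y$. By the CD-Lemma for associative algebras, a linear basis of $\As\<X\cup\dot X\mid S\cup\varphi(S)\>$ is given by the images of the reduced words. A word in $(X\cup\dot X)^*$ is reduced precisely when it contains no letter of $X_0$, no descent inside its undotted part, no dotted letter immediately followed by a letter of $X_1$, and no descent inside its dotted part. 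Hence every reduced word has the form $u\cdot v$, where $u=y_1y_2\cdots y_p$ is a (possibly empty) non-decreasing word in $X_1$ and $v=\dot x_1\dot x_2\cdots \dot x_q$ is a (possibly empty) non-decreasing word in $\dot X$.

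Next I would apply Corollary~\ref{cor:TriFactor}: $U_-(L)\simeq \tri\As\<X\mid S\>$ is isomorphic to the subalgebra of $(F/I)^{(3)}$ generated by $\dot X$, which equals $V/(S)^{(3)}$. Under the identification with reduced words, the elements of $V$ correspond to monomials with $\deg_{\dot X}>0$, so the basis of $U_-(L)$ is indexed by pairs $(u,v)$ with $u$ a non-decreasing word in $X_1$ (possibly empty) and $v$ a \emph{non-empty} non-decreasing word in $\dot X$.

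Finally I would identify the two families. Since $X_1$ is a basis of $\bar L=L/L_0$ and the relations \eqref{eq:00dotCom} encode the bracket of $\bar L$, the PBW theorem gives that non-decreasing words in $X_1$ form a basis of $U(\bar L)$. Since $X$ is a basis of $L^{(\perp)}$ (the Lie algebra structure on $L$ from $[\cdot\perp\cdot]$) and the relations \eqref{eq:2dotCom} encode its bracket, non-decreasing words in $\dot X$ form a basis of $U(L^{(\perp)})$, and the non-empty ones span $U_0(L^{(\perp)})$. The linear bijection $(u,v)\mapsto u\otimes v$ from reduced words of this shape to the tensor product basis of $U(\bar L)\otimes U_0(L^{(\perp)})$ yields the required linear isomorphism.

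The main obstacle is the careful bookkeeping in the second step: one must verify that the mixed relations \eqref{eq:1dotCom} force every $\dot X$-letter to occur strictly to the right of every $X_1$-letter in a reduced word, so that the reduced monomials genuinely split as concatenations $u\cdot v$, with no interleaving. Once this separation is established, the rest is a direct comparison with two instances of the classical PBW basis.
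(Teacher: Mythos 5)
Your proposal is correct and follows essentially the same route as the paper: it reads off the reduced monomials for the Gr\"obner---Shirshov basis \eqref{eq:0dotCom}--\eqref{eq:2dotCom}, uses Theorem~\ref{thm:IdealsReplication} (Corollary~\ref{cor:TriFactor}) to keep only the words of positive degree in $\dot X$, and matches the resulting words $x_1\dots x_k\dot y_1\dots\dot y_m$ ($x_i\in X_1$ non-decreasing, $y_j\in X$ non-decreasing, $m\ge 1$) with the tensor product of the PBW basis of $U(\bar L)$ and a basis of $U_0(L^{(\perp)})$. The separation of dotted letters to the right of $X_1$-letters, which you flag as the main bookkeeping point, does follow from the leading words $\dot x y$ of \eqref{eq:1dotCom}, exactly as in the paper's proof.
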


\begin{proof}
Linear basis of $U_-(L)$ consists of those associative words of degree $>0$ in $\dot X$ that are reduced 
relative to \eqref{eq:0dotCom}--\eqref{eq:2dotCom}:
\[
 x_1\dots x_k \dot y_1\dots \dot y_m, \quad x_i\in X_1,\ y_j\in X,
\]
where 
$x_1\le \dots \le x_k$, $k\ge 0$, $y_1\le \dots \le y_m$, $m\ge 1$.
\end{proof}

\begin{remark}
For di-algebras, the functor $-:\di\As\to \di\Lie $ and its left adjoint $U_-(\cdot)$ were studied in \cite{AG03, BCL10, Kol2008, Loday2001}.
The same result may easily be obtained with our technique by restriction of Theorem~\ref{thm:TriAs_GSB} to $\dot X$-linear relations. 
\end{remark}

\end{document}